\newcommand{\labitem}[2]{%
\def\@itemlabel{\textbf{#1}}
\item
\def\@currentlabel{#1}\label{#2}}
\theoremstyle{plain}
\newtheorem{thm}{Theorem}[section]
\newtheorem{prp}[thm]{Proposition}
\newtheorem{dfn-prp}[thm]{Definition-Proposition}
\newtheorem{lem}[thm]{Lemma}
\newtheorem{cor}[thm]{Corollary}
\newtheorem*{cnj}{Conjecture}
\newtheorem*{thmA}{Theorem A}
\newtheorem*{thmB}{Theorem B}
\newtheorem*{thmC}{Theorem C}
\theoremstyle{definition}
\newtheorem{dfn}[thm]{Definition}
\theoremstyle{remark}
\newtheorem{exm}[thm]{Example}
\newcommand{\Hom}{\operatorname{Hom}}
\newcommand{\Mor}{\operatorname{Mor}}
\newcommand{\Inn}{\operatorname{Inn}}
\newcommand{\Out}{\operatorname{Out}}
\newcommand{\Id}{\mathrm{Id}}
\newcommand{\into}{\hookrightarrow}
\newcommand{\from}{\leftarrow}
\newcommand{\SL}{\operatorname{SL}}
\newcommand{\res}{\operatorname{r}}
\newcommand{\resm}{\operatorname{res}}
\newcommand{\tr}{\operatorname{t}}
\newcommand{\trm}{\operatorname{tr}}
\newcommand{\iso}{\operatorname{iso}}
\newcommand{\Syl}{\operatorname{Syl}}
\newcommand{\Ob}{\mathrm{Ob}}
\newcommand{\Top}{\mathsf{Top}}
\newcommand{\Mack}{\mathsf{Mack}}	
\newcommand{\hocolim}{\operatornamewithlimits{hocolim}}
\newcommand{\la}{\langle}
\newcommand{\ra}{\rangle}
\newcommand{\ol}{\overline}
\def\@nameedef#1{\expandafter\edef\csname #1\endcsname}
\def\@nameedef#1{\expandafter\edef\csname #1\endcsname}
\def\@nameedef#1{\expandafter\edef\csname #1\endcsname}
\def\O{\ensuremath{\mathcal{O}}}
\newcommand{\Fun}{\mathsf{Fun}}
\newcommand{\fgmod}[1]{#1\text{-}\mathsf{mod}}
\newcommand{\Mod}[1]{#1\text{-}\mathsf{Mod}}
\newcommand{\Chi}{\mathcal{X}} 
\begin{document}
\title{Mackey functors and sharpness for fusion systems}
\author{Antonio D\'iaz}
\email{adiaz@agt.cie.uma.es}
\address{Departamento de \'Algebra, Geometr\'ia y Topolog\'ia, 
	Universidad de M\'alaga, 
	Apdo correos 59, 
	29080 M\'alaga, 
	Spain}
\author{Sejong Park}
\email{sejong.park@nuigalway.ie}
\address{School of Mathematics, Statistics and Applied Mathematics, 
	National University of Ireland, Galway, 
	University Road, Galway,
	Ireland}
\thanks{\rm The first author is supported by MICINN grant RYC-2010-05663 and partially supported by FEDER-MCI grant MTM2010-18089 and Junta de Andaluc{\'\i}a grant FQM-213.}


\keywords{homology decomposition, classifying space, higher limits, fusion systems, Mackey functor.}

\begin{abstract}
We develop the fundamentals of Mackey functors in the setup of fusion systems including an acyclicity condition as well as a parametrization and an explicit description of simple Mackey functors. Using this machinery we extend Dwyer's sharpness results  to exotic fusion systems $\CF$ on a finite $p$-group $S$ with an abelian subgroup of index $p$.
\end{abstract}


\maketitle

\section{Introduction.}\label{section:Introduction}

In \cite{BLO2003theory}, Broto, Levi and Oliver introduced a family of topological spaces known as $p$-local finite groups, based on fusion systems defined by Puig. These objects behave homotopically like the $p$-completed classifying spaces of finite groups but without necessarily being one of them. In particular, these spaces may be described in different ways as a homotopy colimit of simpler spaces; see the subgroup and centralizer homology decompositions in \cite[\S 2]{BLO2003theory}.

Let $\CF$ be a saturated fusion system on the finite $p$-group $S$. By the groundbreaking recent work of Chermak \cite{C2011}, we can associate to each saturated fusion system $\CF$ a unique $p$-local finite group that plays the role of the classifying space $B\CF$ of $\CF$. The subgroup homology decomposition for $B\CF$ permits  to reconstruct this classifying space by gluing together classifying spaces $BP$, where $P$ runs over the collection of  subgroups of $S$ that are $\CF$-centric.

For the finite group case, i.e., for $\CF=\CF_S(G)$ with $G$ a finite group containing $S$ as a Sylow $p$-subgroup, the subgroup homology decomposition possesses one further feature called \emph{sharpness}. To explain this terminology we need first to consider the orbit category $\CO(\CF)$ and the centric orbit category $\CO(\CF^c)$. Objects in $\CO(\CF)$ are all subgroups of $S$ and morphisms are given by $\Hom_{\CO(\CF)}(P,Q)=\Inn(Q)\backslash\Hom_{\CF}(P,Q)$, where $\Inn(Q)$ acts by post-composition. The centric orbit category $\CO(\CF^c)$ is the full subcategory of $\CO(\CF)$ with objects the $\CF$-centric subgroups of $S$. Then the subgroup decomposition reads as 
\[
B\CF\simeq \hocolim_{\CO(\CF^c)} \widetilde{B}, 
\]
where $\widetilde{B}\colon\CO(\CF^c)\rightarrow \Top$ is a functor such that $\widetilde B(P)$ has the homotopy type of $BP$ for each $\CF$-centric subgroup $P\leq S$.  
By the work of Bousfield and Kan \cite{BK1972}, the above homotopy colimit gives rise to a first quadrant cohomology spectral sequence
\[
E^{i,j}_2={{\varprojlim}^i_{\CO(\CF^c)}}  H^j(-;\BF_p)\Rightarrow H^{i+j}(B\CF;\BF_p).
\]
Sharpness means that this spectral sequence collapses onto the vertical axis. This is equivalent to that $H^j(-;\BF_p)$ is \emph{acyclic} as a contravariant functor over $\CO(\CF^c)$ for any $j\geq 0$, which means that the higher limits  ${{\varprojlim}^i_{\CO(\CF^c)}}  H^j(-;\BF_p)$ vanish for all $i\geq 1$ and $j\geq 0$. It was proven for the finite group case $\CF=\CF_S(G)$ by Dwyer in \cite[Theorem 10.3]{D1998}, where it is referred to as \emph{subgroup-sharpness for the collection of $p$-centric subgroups}. More precisely, Dwyer's result states that the higher limits ${{\varprojlim}^i_{\CO^c_p(G)}} H^j(-;\BF_p)$ vanish, where $\CO^c_p(G)$ is the category with objects the $p$-centric subgroups $P$ of $G$ and morphisms $\Mor_{\CO^c_p(G)}(P,Q)=Q\backslash N_G(P,Q)$, where $N_G(P,Q) = \{ x\in G \mid {}^{x}P \leq Q \}$. But in fact, by \cite[Lemma
1.3]{{BLO2003equiv}}, the higher limits over $\CO(\CF^c)$ and $\CO^c_p(G)$ coincide (see also the proof of Theorem~\hyperref[theorem:group-case]{B} below).


An important feature of the cohomology functor $H^j(-;\BF_p)$ is that it is a global Mackey functor (see \cite[\S 1]{W1993} for a definition). Roughly speaking, this means that transfer and restriction for cohomology satisfy various compatibility conditions including the Mackey decomposition formula.



Given a fusion system $\CF$ on a finite $p$-group $S$, we can define a \emph{Mackey functor} for $\CF$ (over a commutative ring $k$) analogously (see Section \ref{section:Mackey functors.} for a precise definition). It consists of a function $M$ which assigns a $k$-module $M(P)$ to each subgroup $P$ of $S$, homomorphisms $\res^P_Q \colon M(P) \to M(Q)$ and $\tr^P_Q \colon M(Q) \to M(P)$ for $Q\leq P\leq S$ and isomorphisms $\iso(\varphi) \colon M(P) \to M(\varphi(P))$ for any isomorphism $\varphi$ of $\CF$ and they satisfy (among others) the Mackey decomposition formula: 
\begin{equation}
\label{equ:MackeydecompO(F)intro}
\res^{P}_{Q} \circ \tr^{P}_{R} = \sum_{x \in [Q \backslash P/R]} \tr^{Q}_{Q\cap{}^{x}R} \circ \res^{{}^{x}R}_{Q\cap {}^{x}R} \circ \iso(c_x|_R)\colon M(R)\to M(Q),
\end{equation}
where $Q,R\leq P\leq S$ and $[Q\backslash P/R]$ denotes a set of representatives of the double cosets $QxR$ in $P$. We shall also introduce the notion of \emph{$\CX$-restricted Mackey functor} for $\CF$, where $\CX$ is an $\CF$-overconjugacy closed collection of subgroups of $S$, for instance, the collection $\CF^c$ of the $\CF$-centric sugroups of $S$. In this case, the Mackey decomposition formula for $\CX$ takes the form
\begin{equation}
\label{equ:MackeydecompO(X)intro}
\res^{P}_{Q} \circ \tr^{P}_{R} = \sum_{x \in [Q \backslash P/R]_{\CX}} \tr^{Q}_{Q\cap{}^{x}R} \circ \res^{{}^{x}R}_{Q\cap {}^{x}R} \circ \iso(c_x|_R)\colon M(R)\to M(Q),
\end{equation}
where $Q,R\leq P$ are subgroups in $\CX$ and $[Q \backslash P/R]_{\CX}$ is a set of representatives of the double cosets $QxR$ in $P$ with $Q \cap {}^{x}R \in \CX$ . 

The notions of Mackey functors for $\CF$ and $\CX$-restricted Mackey functor for $\CF$ may also be stated in terms of pairs of functors $M=(M^*, M_*)\colon\CO(\CF)\to\Mod{k}$ and $M=(M^*, M_*)\colon\CO(\CX)\to\Mod{k}$, respectively, where $M^*$ is contravariant, $M_*$ is covariant, $\CO(\CX)$ is the full subcategory of $\CO(\CF)$ with object set $\CX$, and $\Mod{k}$ is the category of $k$-modules. In our main applications, we only consider Mackey functors where the ground ring $k$ is a field and whose values are in the category $\fgmod{k}$ of finite dimensional $k$-vector spaces. Roughly speaking, the contravariant part $M^*$ is obtained from $\res$ and $\iso$, and the covariant part $M_*$  from $\tr$ and $\iso$. This equivalence and the full detailed definitions of Mackey functors for fusion systems are given in Section \ref{section:Mackey functors.}. 

The acyclicity of Mackey functors over suitable categories is well-known and traces back to the paper \cite{JM1992} of Jackowski and McClure. For the setup of fusion systems, the combination of  Jackowski and McClure's result with the work of D\'iaz and Libman \cite{DL2009} gives the following acyclicity result.

\begin{thmA} 
Let $\CF$ be a  saturated fusion system on a finite $p$-group $S$.  Then for every $\CF^c$-restricted Mackey functor $M = (M^*,M_*) \colon\CO(\CF^c)\to\Mod{\BZ_{(p)}}$ for $\CF$, the contravariant part $M^*$ is acyclic: $\varprojlim^i_{\CO(\CF^c)} M^*|_{\CO(\CF^c)}=0$ for $i>0$.
\end{thmA}

This theorem is proven in Section~\ref{S:acyclic-Mackey} and it will play a key role in the proof of Theorem C below. Motivated by Dwyer's sharpness result and the above theorem, we make the following conjecture,  which we name after the original notion of sharpness.


\begin{cnj}[Sharpness for fusion systems]
\label{theorem:conjectureA}
Let $\CF$ be a saturated fusion system on a finite $p$-group and let $M=(M^*,M_*)\colon \CO(\CF)\to \fgmod{\BF_p}$ be a Mackey functor over the full orbit category of $\CF$. Then the higher limits of the contravariant part $M^*$ over the centric orbit category vanish: $\varprojlim^i_{\CO(\CF^c)} M^*|_{\CO(\CF^c)}=0$ for $i>0$.
\end{cnj}

This conjecture includes as a particular case the acyclicity of the cohomology functor $H^j(-;\BF_p)\colon\CO(\CF^c)\rightarrow \fgmod{\BF_p}$ for any $j\geq 0$, which is one of the open problems listed in \cite[III.\S7]{AKO2011}.  Dwyer's subgroup-sharpness for the collection of $p$-centric subgroups shows that the conjecture holds for $M$ equal to the cohomology functor $H^j(-;\BF_p)\colon\CO(\CF)\rightarrow \fgmod{\BF_p}$ and $\CF=\CF_S(G)$ the fusion system of a finite group $G$ with Sylow $p$-subgroup $S$.


In this paper, we confirm the sharpness conjecture for some special cases. First, the conjecture holds when $\CF$ is a group fusion system. It is basically the result of Jackowski--McClure (see Proposition \ref{T:JM-acyclicity}) translated to our setting, using two lemmas that we can either mod out by $p'$-subgroups (\cite[Lemma
1.3]{{BLO2003equiv}}) or discard non $p$-centric subgroups (\cite[Theorem 1.2]{G2002}) without changing the higher limits.

\begin{thmB}
Let $\CF=\CF_S(G)$ be the fusion system of the finite group $G$ with $S\in \Syl_p(G)$ and let $M=(M^*,M_*)\colon \CO(\CF)\to \fgmod{\BF_p}$ be a Mackey functor over the full orbit category of $\CF$. Then the higher limits of $M^*$ over the orbit centric category vanish: $\varprojlim^i_{\CO(\CF^c)} M^*|_{\CO(\CF^c)}=0$ for $i>0$.
\end{thmB}

For exotic fusion systems the strategy used to prove Theorem B can not be applied because of the lack of an ambient finite group inducing the given fusion. Nevertheless, the development of some standard machinery for Mackey functors provides us with a suitable setting to study the sharpness conjecture even in the exotic case. These tools consist of a parametrization and a concrete description of simple Mackey functors (Section \ref{section:Simple Mackey functors}) as well as the aforementioned acyclicity result for $\CF^c$-restricted Mackey functors (Theorem A above and Section \ref{S:acyclic-Mackey}). In short, we filter the given Mackey functor $M$ over the full orbit category $\CO(\CF)$ with simple subquotients, and use the concrete description of those simple subquotients to show their restriction to $\CO(\CF^c)$ are $\CF^c$-restricted Mackey functors. With this method, we obtain the following theorem.

\begin{thmC}
Let $S$ be a nonabelian finite $p$-group with an abelian subgroup of index $p$. For any  saturated fusion system $\CF$ on $S$ and any Mackey functor 
\[
M=(M^*,M_*)\colon \CO(\CF)\rightarrow \fgmod{\BF_p},
\]
we have 
\[
{{\varprojlim}^i_{\CO(\CF^c)}} M^*|_{\CO(\CF^c)}= 0\qquad\forall i>0.
\]
\end{thmC}

We consider the above case because such $p$-groups $S$ afford plenty of exotic fusion systems, even though $S$ is close to being abelian. Thus Theorem C with $M=H^j(-;\BF_p)$ extends Dwyer's result to those exotic fusion systems. All  fusion systems on $p$-groups of $p$-rank two for $p$ odd are classified by the first author and Ruiz and Viruel \cite{RV2004, DRV2007}, and all exotic fusion systems among them are afforded by $p$-groups $S$ having an abelian subgroup $A$ of index $p$. More recently Oliver \cite{Oindp1} set out to classify all simple  fusion systems on such $p$-groups $S$. He has completed the case where $p$ is odd and $A$ is not $\CF$-essential, and found that most of them are exotic. We hasten to point out that Theorem C does not require the full classification of such fusion systems, though. In particular, it applies to all finite $p$-groups $S$ having an abelian subgroup $A$ of index $p$, and does not require the assumption that $A$ is not $\CF$-essential. Theorems B and C are proven in Section \ref{section:Examples.}.

\textbf{Acknowledgements:} We are grateful to Adam Glesser and Radu Stancu for their support and helpful comments in several occasions during the long period of gestation of this work. We are also in debt to \'Ecole Polythechnique F\'ed\'erale de Lausanne, Universit\'e de Picardie and Universidad de M\'alaga for their hospitality and support.

\section{Mackey functors for fusion systems}\label{section:Mackey functors.}

Let $\CF$ be a fusion system on a finite $p$-group $S$. Let $\CX$ be a collection of  subgroups of $S$ which is {\em closed under $\CF$-overconjugacy}, in the sense that if $\varphi\colon P\to Q$ is a morphism in $\CF$ and $P$ belongs to $\CX$, then so does $Q$. For example, the collection $\CF^c$ of the $\CF$-centric subgroups of $S$ is closed under $\CF$-overconjugacy. Let $\CO(\CX)$ denote the full subcategory of the orbit category $\CO(\CF)$ with object set $\CX$. For a morphism $\varphi\colon P\to Q$ in $\CF$, let $[\varphi]$ denote the $\Inn(Q)$-orbit of $\varphi$. For subgroups $Q\leq P\leq S$, let $\iota_Q^P\colon Q\to P$ denote the inclusion map. For a subgroup $P$ of $S$ and subgroups $Q$, $R$ of $P$, let $[Q\backslash P/R]$ denote a set of representatives of the double cosets $QxR$ in $P$, and let 
\[
	[Q\backslash P/R]_{\CX} = \{ x \in [Q\backslash P/R] \mid Q \cap {}^{x}R \in \CX \}.
\]

\begin{dfn} \label{D:Mackey-bivariant}
Let $\CF$ be a fusion system and let $\CX$ be an $\CF$-overconjugacy closed collection of subgroups of $S$. An  {\em $\CX$-restricted Mackey functor} for $\CF$ over a commutative ring $k$ with identity element is a pair of functors 
\[
	M=(M^*, M_*)\colon\CO(\CX)\to\Mod{k}
\]
satisfying the following conditions.
\begin{enumerate}
\item (Bivariance) $M^*\colon \CO(\CX) \to \Mod{k}$ is a contravariant functor, $M_*\colon \CO(\CX) \to \Mod{k}$ is a covariant functor, and $M^*(P) = M_*(P) =: M(P)$ for all objects~$P$ of $\CX$.
\item (Isomorphism) $M^*([\alpha]) =  M_*([\alpha]^{-1})$ if $[\alpha]$ is an isomorphism in $\CO(\CX)$.
\item (Mackey decomposition) If $Q, R \leq P \leq S$ are in $\CX$, then
\[
	M^*([\iota^P_Q]) \circ M_*([\iota^P_R]) = \sum_{x\in [Q\backslash P/R]_{\CX}} M_*([\iota^Q_{Q\cap{}^{x}R}]) \circ M^*([\iota^{{}^{x}R}_{Q\cap{}^{x}R}]) \circ M_*([c_x|_R]).
\]
\end{enumerate}
When $\CX$ consists of all subgroups of $S$, we simply say that $M$ is a {\em Mackey functor} for $\CF$.
\end{dfn}

In the above definition, the Mackey decomposition condition is in fact ``$\CX$-truncated Mackey decomposition'' in which we take only those double coset representatives whose corresponding intersections belong to $\CX$. The reason why we consider these restricted Mackey functors is that the $\CF^c$-restricted Mackey functors for $\CF$ have a nice acylicity property (Theorem~\hyperref[T:Mackey-acyclic-over-OFc]{A}) which is crucial for proving Theorem C.

With a slight change of notation, we can reformulate the definition of Mackey functors in a way that is closer to the classical definition as follows.

\begin{prp}\label{D:Mackey-centric-res-tr-iso}
Let $\CF$ be a fusion system on a finite $p$-groups $S$ and let $\CX$ be an $\CF$-overconjugacy closed collection of subgroups of $S$. An  {\em $\CX$-restricted Mackey functor} for $\CF$ over a commutative ring $k$ with identity element can be prescribed as a function
\[
	M\colon \Ob(\Chi) \to \Mod{k} 
\]
together with $k$-module homomorphisms
\begin{gather*}
	\res^P_Q \colon M(P) \to M(Q) \\
	\tr^P_Q \colon M(Q) \to M(P) \\
	\iso(\varphi) \colon M(P) \to M(\varphi(P))
\end{gather*}
for all objects $Q \leq P$ in $\Chi$ and for all isomorphisms $\varphi\colon P\to \varphi(P)$ in $\CF$ satisfying the following conditions.
\begin{enumerate}
\item (Identity) \label{D:Mackey-centric-res-tr-iso:Identity}$\res^P_P = \tr^P_P = \iso(c_x|_P) = \Id_{M(P)}$ for $P$ in $\Chi$ and for $x\in P$.
\item (Transitivity) \label{D:Mackey-centric-res-tr-iso:Transitivity}$\res^Q_R \circ \res^P_Q = \res^P_R$, $\tr^P_Q \circ \tr^Q_R = \tr^P_R$, $\iso(\psi) \circ \iso(\varphi)= \iso(\psi\circ\varphi)$ for $R \leq Q \leq P$ in $\Chi$ and for isomorphisms $\varphi$, $\psi$ in $\Chi$ such that $\psi\circ\varphi$ is defined.
\item (Conjugation) \label{D:Mackey-centric-res-tr-iso:Conjugation}$\iso(\varphi|_Q) \circ \res^P_Q = \res^{\varphi(P)}_{\varphi(Q)} \circ \iso(\varphi)$, $\iso(\varphi) \circ \tr^P_Q = \tr^{\varphi(P)}_{\varphi(Q)} \circ \iso(\varphi|_Q)$ for $Q\leq P$ in $\Chi$ and for isomorphisms $\varphi\colon P\to \varphi(P)$ in $\Chi$.
\item (Mackey decomposition) \label{D:Mackey-centric-res-tr-iso:MackeyDecomposition} For $Q, R \leq P$ in $\Chi$,
\[
	\res^{P}_{Q} \circ \tr^{P}_{R} = \sum_{x \in [Q \backslash P/R]_{\CX}} \tr^{Q}_{Q\cap{}^{x}R} \circ \res^{{}^{x}R}_{Q\cap {}^{x}R} \circ \iso(c_x|_R).
\] 
\end{enumerate}
\end{prp}

\begin{proof}
If $M$ is an $\CX$-restricted Mackey functor for $\CF$, set
\begin{gather*}
	\res^P_Q = M^*([\iota^P_Q])\colon M(P) \to M(Q),\\
	\tr^P_Q = M_*([\iota^P_Q])\colon M(Q) \to M(P),\\
	\iso(\varphi) = M_*([\varphi]) = M^*([\varphi^{-1}])\colon M(P) \to M(\varphi(P)),
\end{gather*}
where $Q \leq P$ are in $\Chi$ and $\varphi\colon P\to \varphi(P)$ is an isomorphism in $\CF$. Conversely, if $M\colon \Ob(\Chi) \to \Mod{k}$ is a function satisfying the above four conditions, for a morphism $\varphi\colon P\to \varphi(P)$ of $\CF$ with $P$, $Q$ in $\CX$, set
\begin{gather*}
	M^*([\varphi]) = \iso(\ol{\varphi}^{-1}) \circ \res^Q_{\varphi(P)}, \\
	M_*([\varphi]) = \tr^Q_{\varphi(P)} \circ \iso(\ol{\varphi}), 
\end{gather*}
where $\ol{\varphi}\colon P \to \varphi(P)$ is the induced isomorphism. It is straightforward to check that the desired conditions are satisfied in both directions.
\end{proof}

The $\CX$-restricted Mackey functors for $\CF$ form a category where a morphism $\eta\colon M \to N$ is a collection of $k$-module homomorphisms $\{\eta_P\colon M(P)\to N(P)\}_{P\in\CX}$ such that for any $[\varphi]\in \Hom_{\CO(\CX)}(P,Q)$ we have $\eta_P\circ M^*([\varphi])=N^*([\varphi])\circ \eta_Q$ and 
$\eta_Q\circ M_*([\varphi])=N_*([\varphi])\circ \eta_P$. We denote by $\Mack^\CX_k(\CF)$ the category of $\CX$-restricted Mackey functors for $\CF$ over $k$, and we simply write $\Mack_k(\CF)$ for the category of Mackey functors for $\CF$ over $k$. We also denote by $\Fun^{\mathsf{bi}}(\CO(\CX),\Mod{k})$ the category of those pairs $(M^*,M_*)$ satisfying the Bivariance condition and the Isomorphism condition in Definition~\ref{D:Mackey-bivariant}.  Then $\Mack^\CX_k(\CF)$ embeds into $\Mack_k(\CF)$ by assigning zero modules for all $P\leq S$ not in $\CX$. Also, $\Mack^\CX_k(\CF)$ embeds into $\Fun^{\mathsf{bi}}(\CO(\CX),\Mod{k})$ by forgetting the Mackey decomposition condition and we have the following commutative diagram
\begin{equation}\label{equ:MackresandFunBires}
\xymatrix{
	\Fun^{\mathsf{bi}}(\CO(\CX),\Mod{k}) & \Fun^{\mathsf{bi}}(\CO(\CF),\Mod{k}) \ar[l]_{\resm}  \\
	\Mack^\CX_k(\CF) \ar@{^{(}->}[r] \ar@{^{(}->}[u] & \Mack_k(\CF).  \ar@{^{(}->}[u]
}
\end{equation}
For $M\in\Mack_k(\CF)$, its restriction $M|_{\CO(\CX)}$ to $\CO(\CX)$ is not necessarily a Mackey functor for $\CX$; it is Mackey if and only if it satisfies the Mackey decomposition condition for $\CX$.

For example, the $j$th cohomology functor $H^j(-;k)$ with trivial coefficients $k$ is a global Mackey functor in the sense of Webb~\cite{W1993} and its restriction $H^j(-,k)|_{\CO(\CF)}$ to $\CO(\CF)$ is a Mackey functor for $\CF$. But the further restriction $H^j(-,k)|_{\CO(\CX)}$ to $\CO(\CX)$ may not be an $\CX$-restricted Mackey functor for $\CF$ (see Example \ref{example:nonmackey}). A criterion which ensures that a Mackey functor $M$ for a  fusion system $\CF$ restricts to an $\CF^c$-restricted Mackey functor $M|_{\CO(\CF^c)}$ is given in Proposition~\ref{prop:MackeyFvsMackeyFc}. We will see examples of $\CF^c$-restricted Mackey functors for $\CF$ in the proof of Theorem~\hyperlink{theorem:index-p-case}{C}.

\section{Simple Mackey functors.}\label{section:Simple Mackey functors}

Let $\CF$ be a fusion system on the finite $p$-group $S$, let $\CX$ be an $\CF$-overconjugacy closed collection of subgroups of $S$ and let $k$ be a commutative ring with identity element. The category $\Mack^\CX_k(\CF)$ of $\CX$-restricted Mackey functors for $\CF$ over $k$ is an abelian category in which kernels and cokernels are constructed ``objectwise''. In this section, we parametrize the simple $\CX$-truncated Mackey functors for $\CF$, i.e., the simple objects in the abelian category $\Mack^\CX_k(\CF)$, and we describe them explicitly. 

First we consider the case where $\CX$ consists of all subgroups of $S$, following Webb's approach \cite[Section 2]{W1993}. Our constructions here differ from those there only in that, instead of considering all finite groups and homomorphisms among them, we examine only those subgroups and morphisms in the category $\CO(\CF)$. It turns out that this restriction is not essential and the arguments follow verbatim. Hence we omit the proofs, but still give the detailed constructions as they are crucial in the applications.

Let $M\in \Mack_k(\CF)$. A \emph{minimal subgroup} for $M$ is a subgroup $Q\leq S$ such that $M(Q)\neq 0$ but $M(P)=0$ for every proper subgroup $P$ of $Q$.

\begin{prp}[{\cite[Proposition 2.1]{W1993}}]
If $M\in \Mack_k(\CF)$ is simple then $M$ has a unique $\CF$-conjugacy class of minimal subgroups $Q$ and $M(Q)$ is a simple $k\Out_\CF(Q)$-module.
\end{prp}

In the opposite direction, for $Q\leq S$ and $V$ a simple $k\Out_\CF(Q)$-module, we describe a simple Mackey functor $S_{Q,V}\in \Mack_k(\CF)$ for which $Q$ is a minimal subgroup and $S_{Q,V}(Q) \cong V$. 

To define the values of $S_{Q,V}$ we start by setting $S_{Q,V}(L)=V$ as a set for each $\CF$-conjugate $L$ of $Q$. For each such subgroup $L$ we also fix an $\CF$-isomorphism $\alpha\colon Q \to L$. If $\alpha_i\colon Q\to L_i$ $(i=1,2)$ are the chosen isomorphisms for subgroups $L_1$ and $L_2$ that are $\CF$-conjugate to $Q$ and $[\gamma]\colon L_1 \to L_2$ is an isomorphism in $\CO(\CF)$ we define 
\[
{S_{Q,V}}_*([\gamma])(v)=[\alpha_2^{-1}\gamma\alpha_1]\cdot v\text{ for $v\in V$ }
\]
and ${S_{Q,V}}^*([\gamma])={S_{Q,V}}_*([\gamma^{-1}])$. This makes $V=S_{Q,V}(L)$ into the $k\Out_\CF(L)$-module obtained from the $k\Out_\CF(Q)$-module $V$ by transporting the action along $\alpha\colon Q\to L$. We denote this module by ${}^\alpha V$.

For a general subgroup $P\leq S$ we set 
\begin{equation}\label{equ:defSQVonobject}
	S_{Q,V}(P)\cong\bigoplus_{Q\stackrel{\alpha}\cong L\leq_P P} \trm_L^{N_P(L)}({}^\alpha V),
\end{equation}
where the direct sum is taken over the $P$-conjugacy classes of the subgroups $L$ of $P$ that are $\CF$-conjugate to $Q$. For each of these classes a representative $L$ is chosen and an $\CF$-isomorphism $\alpha\colon Q\to L$ is also fixed.  The map $\trm^{N_P(L)}_{L}\colon {}^{\alpha}V \to {}^{\alpha}V$ is the relative trace map, where $N_P(L)$ acts on the $k\Out_\CF(L)$-module ${}^\alpha V$ via the map $N_P(L) \to \Out_\CF(L)$ given by conjugation. 

Now let $R\leq S$ be another subgroup and write 
\[
	S_{Q,V}(R)\cong\bigoplus_{Q\stackrel{\beta}\cong L'\leq_R R} \trm_{L'}^{N_R(L')}({}^\beta V),
\]
where we have chosen representatives $L'$ and isomorphisms $\beta\colon Q\to L'$. If $[\gamma] \colon P \to R$ is an isomorphism in $\CO(\CF)$ then ${S_{Q,V}}_*([\gamma])$ is determined on the summand corresponding to $L\leq P$ via the isomorphism $\gamma|_{L}\colon L\to \gamma(L)\leq R$ in the way described before. We define ${S_{Q,V}}^*([\gamma])={S_{Q,V}}_*([\gamma^{-1}])$.

Suppose now that $R\leq P$. The transfer $\tr^P_R={S_{Q,V}}_*([\iota_R^P])$ sends the summand corresponding to $L'\leq R$ to the summand corresponding to $L=L'\leq P$ as follows, where we assume for simplicity that $\alpha=\beta\colon Q\to L=L'$ are the chosen representatives and isomorphisms:
\begin{equation}\label{equ:defSQVtransfer}
 \begin{array}{cccc}
         &\trm_L^{N_R(L)}({}^\alpha V) &\xrightarrow{\tr^P_R} &\trm_L^{N_P(L)}({}^\alpha V)\\
         & v &\mapsto & \trm_{N_R(L)}^{N_P(L)}(v).
\end{array}
\end{equation}
The restriction $\res^P_R={S_{Q,V}}^*([\iota_R^P])$ sends the summand corresponding to $L\leq P$ to $0$ if no $P$-conjugate of $L$ is contained in $R$. Assume otherwise that there are $P$-conjugates of $L$ lying in $R$ and let $L'_i={}^{p_i} L$ with $p_i\in P$ be the chosen representatives for their $R$-conjugacy classes. Let $\alpha\colon Q\to L\leq P$ be the chosen isomorphism for $L$ and assume  for simplicity that $\beta_i=c_{p_i}\circ \alpha$ are the chosen isomorphisms for the subgroups $L'_i$. Then $\res^P_R$ on the summand for $L$ is the diagonal
\begin{equation}\label{equ:defSQVrestriction}
 \begin{array}{cccc}
         &\trm_L^{N_P(L)}({}^\alpha V) &\xrightarrow{\res^P_R} &\bigoplus_i \trm_{L'_i}^{N_R(L'_i)}({}^{\beta_i}V)\\
         & v &\mapsto & \oplus_i v.
\end{array}
\end{equation}

\begin{prp}[{\cite[Section 2]{W1993}}]\label{proposition:parametrizationofsimples}
Let $\CF$ be a fusion system on a finite $p$-group $S$. The simple Mackey functors in $\Mack_k(\CF)$ are of the form $S_{Q,V}$, where $Q$ runs over the subgroups of $S$ and $V$ runs over the simple $k\Out_\CF(Q)$-modules. Moreover, $S_{Q,V}$ and $S_{R,W}$ are isomorphic in $\Mack_k(\CF)$ if and only if there is an $\CF$-isomorphism $\alpha\colon Q\to R$ such that ${}^{\alpha}V \cong W$ as $k\Out_\CF(R)$-modules.
\end{prp}

Now we turn to the simples in $\Mack^\CX_k(\CF)$ for an arbitrary $\CF$-overconjugacy closed collection $\CX$ of subgroups of $S$. It turns out that there is a straightforward relation between simples in $\Mack_k(\CF)$ and in $\Mack^\CX_k(\CF)$.

\begin{prp}
Let $\CF$ be a fusion system on a finite $p$-group $S$ and let $\CX$ be an $\CF$-overconjugacy closed collection of subgroups of $S$. For $Q\leq S$ and $V$ a simple $k\Out_\CF(Q)$-module, the simple Mackey functor $S_{Q,V}$ for $\CF$ vanishes outside of $\CX$ and restricts to a simple object in $\Mack^\CX_k(\CF)$ if and only if $Q$ belongs to $\CX$, and all simple objects in $\Mack^\CX_k(\CF)$ arise this way.
\end{prp}

\begin{proof}
If $Q \in \CX$, then $S_{Q,V}(P) = 0$ for $P\leq S$ with $P\notin \CX$ because of the construction of $S_{Q,V}$ and because $\CX$ is $\CF$-overconjugacy closed. In this case the restriction $S_{Q,V}|_{\CO(\CX)}$ belongs to $\Mack^\CX_k(\CF)$ because the $\CX$-truncated Mackey decomposition (Definition \ref{D:Mackey-bivariant}(3)) for $S_{Q,V}|_{\CO(\CX)}$ is obtained for free from the Mackey decomposition for $S_{Q,V}$. Conversely, if $S_{Q,V}$ vanishes outside of $\CX$, then $Q$ must belong to $\CX$ because $S_{Q,V}(Q) \cong V \neq 0$. Finally, suppose $M$ is an object of $\Mack^\CX_k(\CF)$ and let $N$ be the ``extension by zero'' of $M$ to $\CO(\CF)$. Then $N\in \Mack_k(\CF)$ and $N|_{\CO(\CX)} = M$. Moreover, $M$ is simple in $\Mack^\CX_k(\CF)$ if and only if $N$ is simple in $\Mack_k(\CF)$.
\end{proof}

We record the following useful property of $S_{Q,V}$.

\begin{cor}[{\cite[Proposition 2.8]{W1993}}] \label{T:non-vanishing} Let $\CF$ be a fusion system on a finite $p$-group $S$. Let $Q$ be a subgroup of $S$ and let $V$ be a simple $k\Out_\CF(Q)$-module and consider the simple Mackey functor $S_{Q,V}$ for $\CF$. If $S_{Q,V}(P) \neq 0$, then there exists a morphism $\alpha\colon Q\to P$ in $\CF$ such that the stabilizer in $N_P(\alpha(Q))$ of ${}^{\alpha}V$ is equal to $\alpha(Q)$; in particular, $C_P(\alpha(Q)) \leq \alpha(Q)$. \label{T:non-vanishing:Stab}
\end{cor}
Recall now that a Mackey functor $M$ is \emph{cohomological} if for every inclusion of subgroups $R\leq P$, restriction followed by transfer is multiplication by the index: 
\[
\tr^P_R (\res_R^P (x))=|P:R|x\text{ for every $x\in M(P)$.}
\]
Th\'evenaz and Webb \cite[Proposition 16.10]{TW1995} showed that a simple Mackey functor for a fixed finite group $G$ over a field of characteristic $p$ is cohomological if and only if its minimal subgroups are $p$-groups. We prove an analogous result for simple Mackey functors for fusion systems. First we recall a group theoretic result.

\begin{lem} \label{T:counting-cosets-mod-p}
Let $G$ be a finite group, $H$ a subgroup of $G$ and $Q$ a $p$-subgroup of $G$. Then
\[
	|N_G(Q,H):H| \equiv |G:H| \mod p,
\]
where $N_G(Q,H) = \{ x\in G \mid {}^{x}Q \leq H \}$.
\end{lem}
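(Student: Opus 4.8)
The plan is to realize both sides as sizes of a set and its $Q$-fixed points, and then invoke the standard fixed-point congruence for an action of a $p$-group. First I would check that the symbol $|N_G(Q,H):H|$ is meaningful by showing that $N_G(Q,H)$ is a union of right cosets $Hg$. Indeed, if ${}^{g}Q \leq H$ and $h\in H$, then ${}^{hg}Q = h({}^{g}Q)h^{-1} \leq H$, so $N_G(Q,H)$ is left-$H$-invariant; consequently $|N_G(Q,H):H|$ is just the number of right cosets $Hg$ contained in $N_G(Q,H)$.

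Next I would let $Q$ act on the right coset space $\Omega = H\backslash G$ by right multiplication, $(Hg)\cdot q = Hgq$, which is a genuine right action. The coset $Hg$ is fixed by every $q\in Q$ if and only if $gqg^{-1}\in H$ for all $q\in Q$, that is, if and only if ${}^{g}Q \leq H$, i.e. $g\in N_G(Q,H)$. By the previous paragraph this condition depends only on the coset $Hg$, so the fixed-point set $\Omega^Q$ is exactly the collection of right cosets lying in $N_G(Q,H)$. Therefore
\[
	|\Omega^Q| = |N_G(Q,H):H|, \qquad |\Omega| = |G:H|.
\]

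Finally I would apply the elementary fact that when a finite $p$-group $Q$ acts on a finite set $\Omega$ one has $|\Omega^Q| \equiv |\Omega| \pmod{p}$: every orbit has size a power of $p$ dividing $|Q|$, so the orbits of size greater than one contribute multiples of $p$, leaving $|\Omega| - |\Omega^Q|$ divisible by $p$. Combining this with the two identities above gives $|N_G(Q,H):H| \equiv |G:H| \pmod{p}$, as desired. There is no substantial obstacle here; the only point requiring care is the bookkeeping in the first paragraph, namely verifying that $N_G(Q,H)$ is genuinely a union of right cosets so that the index notation counts precisely the $Q$-fixed cosets, after which the congruence is immediate.
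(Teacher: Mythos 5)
Your proposal is correct and follows exactly the paper's argument: let $Q$ act on the right on the coset space $H\backslash G$, identify the fixed points with $H\backslash N_G(Q,H)$, and use that nontrivial orbits of a $p$-group have size divisible by $p$. The only difference is that you spell out the routine verification that $N_G(Q,H)$ is a union of right $H$-cosets, which the paper leaves implicit.
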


\begin{proof}
Consider the right $Q$-action on the set $H\backslash G$ of right $H$-cosets in $G$ induced by multiplication in $G$.  The $Q$-fixed points are $H\backslash N_G(Q,H)$ and the nontrivial $Q$-orbits have cardinality divisible by $p$.
\end{proof}

\begin{prp} \label{T:non-vanishing:cohomological}
Let $\CF$ be a fusion system on a finite $p$-group $S$ and let $k$ be a field of characteristic $p$.  Then every simple Mackey functor for $\CF$ over $k$ is cohomological.
\end{prp}
\begin{proof}
Consider the simple Mackey functor $S_{Q,V}$ in $\Mack_k(\CF)$ with $Q\leq S$ and $V$ a simple $k\Out_\CF(Q)$-module. Let $R\leq P\leq S$ and consider the summand of $S_{Q,V}(P)$ corresponding to $L\leq P$ and for which the $\CF$-isomorphism $\alpha\colon Q\to L$ has been chosen. Let $C$ be the set of $R$-conjugacy classes of $P$-conjugates of $L$ lying in $R$, and choose representatives $L'_i={}^{p_i}L$ for these classes. Using \eqref{equ:defSQVrestriction} and \eqref{equ:defSQVtransfer} it is easy to see that then $\tr^P_R\res^P_R$ restricts to this summand as
\[
\begin{array}{cccc}
         &\trm_L^{N_P(L)}({}^\alpha V) &\xrightarrow{\tr_R^P\res^P_R} &\trm_L^{N_P(L)}({}^\alpha V)\\
         & v &\mapsto & \sum_i \trm^{N_P(L)}_{N_{{}^{q_i}R}(L)}(v),
\end{array}
\]
where $q_i=p_i^{-1}$. Because $v$ already lies in the invariants $({}^\alpha V)^{N_P(L)}$ we have that 
\[
\tr_R^P\res^P_R(v)=\sum_i |N_P(L):N_{{}^{q_i}R}(L)|v.
\]
Now note that $|C|=|R\backslash N_P(L,R)/N_P(L)|$, where $R$ and $N_P(L)$ act on $N_P(L,R)$ by multiplication on the left and right respectively. Moreover, the action of $N_P(L)$ on the orbit of $p_i$ in $R\backslash N_P(L,R)$ has isotropy group equal to $N_{{}^{q_i}R}(L)$. It turns out then that
\[
\tr_R^P\res^P_R(v)=|N_P(L,R):R|v=|P:R|v,
\]
where the last equality is due to Lemma \ref{T:counting-cosets-mod-p}.
\end{proof}

\section{Acyclicity of Mackey functors} \label{S:acyclic-Mackey}

In this section, we present an important acyclicity condition for Mackey functors for fusion systems. Contrary to the previous two sections, saturation (see \cite[Definition 1.2]{BLO2003theory}) is crucial for the following theorem.

\begin{thmA} \label{T:Mackey-acyclic-over-OFc}
Let $\CF$ be a  saturated fusion system on a finite $p$-group $S$.  Then for every $\CF^c$-restricted Mackey functor $M = (M^*,M_*) \colon\CO(\CF^c)\to\Mod{\BZ_{(p)}}$ for $\CF$, the contravariant part $M^*$ is acyclic: $\varprojlim^i_{\CO(\CF^c)} M^*|_{\CO(\CF^c)}=0$ for $i>0$.
\end{thmA}

This theorem is a consequence of the combination of works of Jackowski and McClure \cite{JM1992} and D\'iaz and Libman \cite{DL2009}.  In \cite{JM1992}, Jackowski and McClure,  following Dress's definition of Mackey functors for a finite group $G$, define a \emph{Mackey functor} on a small category $\CC$ with pullbacks as a bivariant functor $M=(M^*,M_*)\colon \CC \to \Mod{\BZ}$ such that every pullback diagram
\[
\xymatrix{
	W \ar[r]^{\gamma}\ar[d]_{\delta} & Y\ar[d]^{\beta}\\
	X \ar[r]^{\alpha} & Z
}
\]
in $\CC$ induces a commutative diagram
\[
\xymatrix{
	M(W) \ar[r]^{M_*(\gamma)}\ar[r] & M(Y)\\
	M(X) \ar[r]^{M_*(\alpha)}\ar[u]^{M^*(\delta)} & M(Z) \ar[u]_{M^*(\beta)}.
}
\]
Further they define a \emph{proto-Mackey functor} on a small category $\CB$ as a a bivariant functor $M\colon\CB\to\Mod{\BZ}$ such that its additive extension $M_\amalg\colon\CB_\amalg\to\Mod{\BZ}$ is a Mackey functor in the previous sense, provided that $\CB_\amalg$ has pullbacks.  Then they show the following acyclicity
criterion.


\begin{prp}[{\cite[Corollary 5.16]{JM1992}}] \label{T:JM-acyclicity}
Let $\CB$ be a small category satisfying the following conditions.
\begin{enumerate}
\labitem{{\rm (B0)}}{B0} The product of each pair of objects of $\CB$ and the pullback of each diagram $c \to e \from d$ of objects of $\CB$ exist in $\CB_\amalg$.
\labitem{{\rm (B1)}}{B1} $\CB$ has finitely many isomorphism classes of objects, each set of morphisms is finite, and all self-maps in $\CB$ are isomorphisms.
\labitem{{\rm (B2)}}{B2} For each object $P$ of $\CB$ there is an object $Q$ of $\CB$ with $|\Hom_\CB(P,Q)|$ prime to~$p$. 
\end{enumerate}
Then for every proto-Mackey functor $M\colon\CB\to\Mod{\BZ_{(p)}}$, its contravariant part $M^*$ is acyclic.
\end{prp}

The condition \ref{B0}, which is denoted as (PB$\times_\amalg$) in \cite{JM1992}, ensures that $\CB_\amalg$ has pullbacks and product of pairs of objects which is distributive over coproducts.  This allows them to define Mackey functors and the Burnside ring for $\CB_\amalg$.  Using the action of the Burnside ring on Mackey functors and the conditions \ref{B1} and \ref{B2}, they prove the above proposition.

On the other hand, D\'iaz and Libman \cite{DL2009} construct the Burnside ring of $\CO(\CF^c)_\amalg$ using the products in $\CO(\CF^c)_\amalg$, which goes back to Puig. It is easy to show that $\CO(\CF^c)_\amalg$ has equalizers.  Consequently, $\CO(\CF^c)_\amalg$ has pullbacks.

\begin{prp}[{\cite[Propositions 2.9 and 2.10]{DL2009}}] \label{T:orbit-centric-extended has pullbacks}
Suppose that $\CF$ is a saturated fusion system on a finite $p$-group $S$.  Then $\CO(\CF^c)$ satisfies the condition \ref{B0}. In particular, for $\CF$-centric subgroups $Q, R\leq P\leq S$, the pullback of $R \xrightarrow{[\iota_{R}^{P}]} P \xleftarrow{[\iota_{Q}^{P}]} Q$ is given by
\[
\xymatrix@C=20mm{
	&\coprod_{x} Q \cap {}^{x}R \ar[r]^{([\iota_{Q \cap {}^{x}R}^{Q}])}\ar[d]_{([c_{x^{-1}}])} &Q\ar[d]^{[\iota_{Q}^{P}]}\\
	&R \ar[r]^{[\iota_{R}^{P}]} &P
}
\]
where the coproduct is taken over $x \in [Q\backslash P/R]_{\CF^c}= \{ x \in [Q\backslash P/R] \mid Q \cap {}^{x}R \in \CF^c \}.$
\end{prp}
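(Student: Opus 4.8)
The plan is to verify condition~\ref{B0} for $\CO(\CF^c)$, that is, to show that the extension $\CO(\CF^c)_\amalg$ obtained by freely adjoining finite coproducts (formal disjoint unions) has products of pairs and pullbacks of cospans, and then to identify the pullback of a cospan of inclusions with the stated double-coset coproduct. The overall architecture will be: products plus equalizers yield all pullbacks, since the pullback of $R\to P\from Q$ is the equalizer of the two composites $R\times Q\rightrightarrows P$; so I would establish products and equalizers first and then read off the explicit formula.

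The construction of the product of two objects $Q$, $R$ of $\CO(\CF^c)$ inside $\CO(\CF^c)_\amalg$ is the part I expect to be the main obstacle, precisely because there is no ambient finite group in which to take an honest cartesian product. Here I would invoke the construction of Puig carried out by D\'iaz and Libman~\cite{DL2009}: realize the objects of $\CO(\CF^c)_\amalg$ as centric $\CF$-sets, form the product of the underlying $S$-sets through the usual Mackey double-coset decomposition, and then reassemble the result $\CF$-stably as a coproduct of transitive pieces, discarding the non-centric orbits. This is the step that genuinely uses saturation, and I would take it as input.

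Equalizers, by contrast, are elementary. A morphism out of a single object $T$ of $\CO(\CF^c)$ into a coproduct necessarily lands in a single summand, so for parallel morphisms $f,g\colon X\to Y$ with $X=\coprod_i X_i$ decomposed into objects of $\CO(\CF^c)$, the equalizer is just the subcoproduct $\coprod X_i$ over those $i$ for which $f|_{X_i}=g|_{X_i}$ (an empty coproduct on the components where they disagree). Combining this with the product then gives pullbacks via the equalizer description above, so $\CO(\CF^c)_\amalg$ satisfies~\ref{B0}.

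Finally I would pin down the pullback of $R\xrightarrow{[\iota^P_R]}P\xleftarrow{[\iota^P_Q]}Q$ by checking the universal property of the claimed coproduct directly. The square commutes because, on the $x$-summand, both composites into $P$ equal $[\iota^P_{Q\cap{}^{x}R}]$: the map $c_{x^{-1}}$ and the inclusion differ by conjugation by $x\in P$, hence by an element of $\Inn(P)$. As a cone out of a coproduct is a family of cones out of its summands, it suffices to test against a single centric $T$. Given a cone $([\alpha],[\beta])$ with $\alpha\colon T\to Q$ and $\beta\colon T\to R$, the compatibility $[\iota^P_Q\alpha]=[\iota^P_R\beta]$ furnishes $g\in P$ with $\beta=c_g\alpha$ as maps into $P$; writing $g^{-1}=qxr$ with $q\in Q$, $r\in R$ and $x$ the chosen representative of the double coset $Qg^{-1}R$, one checks that $[c_{q^{-1}}\alpha]$ carries $T$ into $Q\cap{}^{x}R$ and is a morphism of cones into the $x$-summand, with $x$ and this morphism forced modulo $\Inn(Q\cap{}^{x}R)$, giving uniqueness. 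The point that makes the indexing set $[Q\backslash P/R]_{\CF^c}$ exactly right is that $c_{q^{-1}}\alpha(T)$ is $\CF$-isomorphic to the $\CF$-centric group $T$, hence $\CF$-centric, and lies in $Q\cap{}^{x}R$; since $\CF^c$ is closed under overgroups, $Q\cap{}^{x}R$ is automatically $\CF$-centric, so the relevant summand is present. Conversely a non-centric intersection admits no $\CF$-morphism from any centric $T$ and therefore receives no cone, so discarding such summands does not disturb the universal property. This is exactly why the coproduct runs only over $[Q\backslash P/R]_{\CF^c}$.
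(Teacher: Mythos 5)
Your overall architecture is the same as the paper's: the paper likewise defers the existence of products in $\CO(\CF^c)_\amalg$ to Puig's construction as carried out in \cite{DL2009}, observes that equalizers give pullbacks, and simply cites \cite[Propositions 2.9 and 2.10]{DL2009} for the explicit formula. What you add is a direct check of the universal property of the double-coset square, and that check is sound: commutativity of the square because $[c_{x^{-1}}]$ followed by $[\iota^P_R]$ and $[\iota^P_{Q\cap{}^xR}]$ differ by $c_x\in\Inn(P)$; the factorization of a cone $([\alpha],[\beta])$ through the $x$-summand via $g^{-1}=qxr$; and, crucially, the observation that the image of a centric $T$ is again $\CF$-centric, so that closure of $\CF^c$ under overgroups forces $Q\cap{}^xR\in\CF^c$ for every double coset that can actually receive a cone --- which is exactly why truncating the index set to $[Q\backslash P/R]_{\CF^c}$ is harmless. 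It is worth noting that none of this verification uses saturation; saturation is consumed entirely by the product construction you take as input.

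The one genuine soft spot is your equalizer paragraph. It is \emph{not} elementary that the equalizer of $f,g\colon X\to Y$ is the subcoproduct of those components $X_i$ with $f|_{X_i}=g|_{X_i}$: for that subcoproduct to be universal you need to know that distinct parallel morphisms $u\neq v\colon X_i\to Y_j$ in $\CO(\CF^c)$ can never be equalized by precomposition, i.e.\ that every morphism of $\CO(\CF^c)$ is an epimorphism. This is a real theorem about \emph{saturated} fusion systems (it follows from the extension axiom, and is established in \cite{DL2009} on the way to the propositions being cited); it fails for general fusion systems --- e.g.\ if $\CF$ is generated by a non-inner automorphism $\phi$ of $S$ that fixes a centric subgroup $T$ pointwise, then $[\iota^S_T]$ equalizes $[\mathrm{id}_S]\neq[\phi]$, and your formula would wrongly return the empty coproduct. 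Fortunately this does not sink the proposal: condition \ref{B0} only asks for pullbacks of cospans of single objects, every such cospan reduces to a cospan of inclusions after factoring each leg as an isomorphism followed by an inclusion, and your direct universal-property argument handles that case without ever invoking the equalizer claim. So the conclusion stands, but the equalizer description should either be dropped or be supported by the epimorphism lemma from \cite{DL2009}.
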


\begin{proof}[Proof of Theorem {\hyperref[T:Mackey-acyclic-over-OFc]{A}}]
By Proposition~\ref{T:orbit-centric-extended has pullbacks} and \cite[Lemma 5.13]{JM1992}, proto-Mackey functors on $\CO(\CF^c)$ are in fact the same as $\CF^c$-truncated Mackey functors for $\CF$ as defined in this paper. Thus it suffices to show that $\CO(\CF^c)$ satisfies the three conditions of Proposition~\ref{T:JM-acyclicity}. We have seen that the condition \ref{B0} holds for $\CO(\CF^c)$. The condition \ref{B1} is trivially satisfied and the condition \ref{B2} follows from the well-known fact (\cite[Proposition 2.8]{DL2009}) that $|\Hom_{\CO(\CF^c)}(P,S)|$ is prime to $p$ for every $\CF$-centric $P\leq S$.
\end{proof}

Now we present a strategy to study acyclicity of restrictions of Mackey functors, which will be used in Section \ref{section:Examples.}. Consider a Mackey functor $M = (M^*,M_*) \colon \CO(\CF)\rightarrow \fgmod{\BF_p}$ defined on the full orbit category. Its restriction to the centric orbit category $M|_{\CO(\CF^c)}$ might not be an $\CF^c$-truncated Mackey functor, and so we cannot use Theorem~\hyperref[T:Mackey-acyclic-over-OFc]{A} directly to conclude that $M^*|_{\CO(\CF^c)}$ is acyclic. Instead we break up the Mackey functor $M$ by taking a composition series, then study how the restrictions of the composition factors contribute to the higher limits of $M^*|_{\CO(\CF^c)}$.

\begin{prp}\label{prop:strategy}
Let $\CF$ be a saturated fusion system on a finite $p$-group $S$ and let $k$ be a field of characteristic $p$. Consider a Mackey functor $M\colon\CO(\CF)\to\fgmod{k}$ which takes finite dimensional $k$-vector spaces as its values. If for each composition factor $S_{Q,V}$ of $M$, where $Q\leq S$ is non-$\CF$-centric and $V$ is a simple $k\Out_\CF(Q)$-module, the functor $S_{Q,V}^*|_{\O(\CF^c)}$ is acyclic, then $M^*|_{\O(\CF^c)}$ is acyclic.
\end{prp}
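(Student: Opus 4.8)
The plan is to deduce the acyclicity of $M^*|_{\CO(\CF^c)}$ from that of the composition factors of $M$, by choosing a composition series of $M$ and running the long exact sequences of higher limits attached to the resulting short exact sequences of contravariant functors. First I would observe that $M$ has finite length in $\Mack_k(\CF)$: since $\CF$ has only finitely many objects (the subgroups of $S$) and $M$ takes finite-dimensional $k$-vector spaces as its values, any chain of subfunctors of $M$ has length at most $\sum_{P\leq S}\dim_k M(P)<\infty$. Hence $M$ admits a composition series $0=M_0\subset M_1\subset\cdots\subset M_\ell=M$ in $\Mack_k(\CF)$, and by Proposition~\ref{proposition:parametrizationofsimples} each successive quotient $M_i/M_{i-1}$ is isomorphic to a simple Mackey functor $S^\CF_{Q_i,V_i}$ with $Q_i\leq S$ and $V_i$ a simple $k\Out_\CF(Q_i)$-module.

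Next I would pass to contravariant parts and restrict to $\CO(\CF^c)$. Both operations are exact: exactness in $\Mack_k(\CF)$ is detected objectwise and $M^*(P)=M(P)$, while restriction along $\CO(\CF^c)\hookrightarrow\CO(\CF)$ is likewise objectwise. Setting $N_i=(M_i)^*|_{\CO(\CF^c)}$, the composition series thus yields a filtration $0=N_0\subseteq N_1\subseteq\cdots\subseteq N_\ell=M^*|_{\CO(\CF^c)}$ of contravariant functors on $\CO(\CF^c)$ whose successive quotients are $N_i/N_{i-1}\cong (S^\CF_{Q_i,V_i})^*|_{\CO(\CF^c)}$.

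The heart of the argument is that each of these subquotients is acyclic, and here the factors split into two types according to the minimal group $Q_i$. If $Q_i$ is non-$\CF$-centric, then $(S^\CF_{Q_i,V_i})^*|_{\CO(\CF^c)}$ is acyclic by hypothesis. If $Q_i$ is $\CF$-centric, then by Proposition~\ref{proposition:parametrizationofsimples} we have $S^\CF_{Q_i,V_i}\in\Mack_k(\CF^c)$, so its restriction to $\CO(\CF^c)$ is a genuine Mackey functor for $\CF^c$ and Theorem~\ref{T:Mackey-acyclic-over-OFc} shows that its contravariant part is acyclic. Here one views $S^\CF_{Q_i,V_i}$ as a Mackey functor over $\plocz$ along the ring map $\plocz\to k$; the cochain complex computing the higher limits over $\CO(\CF^c)$ depends only on the underlying functor, so acyclicity over $\plocz$ and over $k$ coincide.

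Finally I would conclude by induction on $\ell$. Each short exact sequence $0\to N_{i-1}\to N_i\to N_i/N_{i-1}\to 0$ of contravariant functors on $\CO(\CF^c)$ induces a long exact sequence of higher limits; since ${\varprojlim}^j_{\CO(\CF^c)}(N_i/N_{i-1})=0$ for $j\geq 1$ by the previous paragraph and ${\varprojlim}^j_{\CO(\CF^c)}N_{i-1}=0$ for $j\geq 1$ by the inductive hypothesis, three-term exactness forces ${\varprojlim}^j_{\CO(\CF^c)}N_i=0$ for all $j\geq 1$; taking $i=\ell$ gives the claim. The main difficulty is conceptual rather than computational: one must recognize that the two kinds of minimal groups require two different acyclicity inputs — the hypothesis for the non-centric factors and Theorem~\ref{T:Mackey-acyclic-over-OFc} for the centric ones — the latter resting on the observation that an $\CF$-centric simple factor restricts to an honest Mackey functor for $\CF^c$. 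The surrounding homological bookkeeping (exactness of restriction, the long exact sequences, finite length) is routine.
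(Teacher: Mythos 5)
Your proof is correct and follows essentially the same route as the paper's: take a composition series of $M$ (using Proposition~\ref{proposition:parametrizationofsimples} to identify the factors as $S_{Q,V}$'s), restrict to $\CO(\CF^c)$, handle the $\CF$-centric factors via Proposition~\ref{proposition:parametrizationofsimples} and Theorem~\ref{T:Mackey-acyclic-over-OFc} and the non-centric ones by hypothesis, and conclude with the long exact sequences of higher limits. Your added justifications (finite length from finite-dimensionality of values, and the $\plocz$-versus-$k$ coefficient point in applying Theorem~\ref{T:Mackey-acyclic-over-OFc}) are details the paper leaves implicit, but the argument is the same.
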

\begin{proof}
By assumption there is a finite-length filtration of $M$ by successive maximal Mackey subfunctors
\[
0=M_r\subset M_{r-1} \subset\ldots\subset M_{1}\subset M_0=M.
\]
Restricting this filtration we get a filtration of bivariant functors $N_i=M_i|_{\CO(\CF^c)}$. From the long exact sequence of higher limits associated to the short exact sequences $0 \to N^*_{i+1} \to N^*_i \to N^*_i/N^*_{i+1} \to 0$, it is immediate that if each $N_i^*/N^*_{i+1}$ is acyclic then so is $N_0^*=M^*|_{\CO(\CF^c)}$. 
Now, by Proposition \ref{proposition:parametrizationofsimples}, each quotient $M_i/M_{i+1}$ is of the form $S_{Q,V}$, where $Q\leq S$ is a subgroup of $S$ and $V$ is a simple $k\Out_\CF(Q)$-module. Hence, each quotient $N_i^*/N^*_{i-1}$ is of the form $S_{Q,V}^*|_{\O(\CF^c)}$. If $Q$ is $\CF$-centric then ${S_{Q,V}}|_{\O(\CF^c)}$ is in $\Mack^{\CF^c}_k(\CF)$ by Proposition \ref{proposition:parametrizationofsimples} and so $S_{Q,V}^*|_{\O(\CF^c)}$ is acyclic by Theorem \hyperref[T:Mackey-acyclic-over-OFc]{A}. Therefore, $M^*|_{\O(\CF^c)}$ is acyclic provided that $S_{Q,V}^*|_{\O(\CF^c)}$ is acyclic whenever $Q$ is not $\CF$-centric.
\end{proof}

The next proposition together with Theorem \hyperref[T:Mackey-acyclic-over-OFc]{A} give sufficient conditions for acyclicity, which may be applied to functors $S_{Q,V}^*|_{\O(\CF^c)}$ with $Q$ non-$\CF$-centric as in the statement of Proposition \ref{prop:strategy}. In its statement and proof we use the notations $\res$, $\tr$ and $\iso$ introduced in Proposition \ref{D:Mackey-centric-res-tr-iso}.

\begin{prp}\label{prop:MackeyFvsMackeyFc}
Let $\CF$ be a saturated fusion system on a finite $p$-group $S$ and let $k$ be a field of characteristic $p$. Consider a Mackey functor $M$ for $\CF$ over $k$. Then the restriction $M|_{\CO(\CF^c)}$ is an $\CF^c$-truncated Mackey functor for $\CF$ if the composite
\[
	M(P) \xrightarrow{\res^P_{P\cap R}} M(P\cap R) \xrightarrow{\tr^R_{P\cap R}} M(R)
\]
is zero whenever $P, R\leq S$ are $\CF$-centric and $P\cap R$ is not $\CF$-centric. 
\end{prp}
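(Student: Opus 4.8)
The plan is to verify the conditions of Definition~\ref{D:Mackey-centric-res-tr-iso} for $M|_{\CO(\CF^c)}$ with $\CX = \CF^c$. The Identity, Transitivity and Conjugation conditions only involve the maps $\res$, $\tr$ and $\iso$ among $\CF$-centric subgroups, which are by construction the corresponding maps of $M$; hence they are inherited directly from the fact that $M$ is a Mackey functor for $\CF$. As already observed in Section~\ref{section:Mackey functors.}, the one condition that can fail upon restriction is the Mackey decomposition condition. So the entire argument reduces to checking the Mackey decomposition condition for $\CF^c$, which I would do by comparing it against the Mackey decomposition that $M$ already satisfies over $\CF$.

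Next I would fix $\CF$-centric subgroups $Q, R \leq P \leq S$ and write down both decompositions. Since $\CF$ is a fusion system on $S$, every subgroup of $S$ is an object of $\CF$, so $Q \cap {}^{x}R \in \CF$ for every $x$; thus $[Q\backslash P/R]_{\CF} = [Q\backslash P/R]$ and the Mackey decomposition for $M$ over $\CF$ reads
\[
	\res^{P}_{Q} \circ \tr^{P}_{R} = \sum_{x \in [Q \backslash P/R]} \tr^{Q}_{Q\cap{}^{x}R} \circ \res^{{}^{x}R}_{Q\cap {}^{x}R} \circ \iso(c_x|_R).
\]
The decomposition required for $\CF^c$ is exactly this same sum, but restricted to the subset $[Q\backslash P/R]_{\CF^c} = \{ x \mid Q\cap{}^{x}R \in \CF^c \}$. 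For an index $x$ with $Q\cap{}^{x}R$ $\CF$-centric the two summands are literally the same map (all the groups involved are $\CF$-centric, so the relevant $\res$, $\tr$ and $\iso$ are morphisms in $\CF^c$ agreeing with their $\CF$-counterparts). Therefore it suffices to show that every summand indexed by an $x$ with $Q\cap{}^{x}R$ \emph{not} $\CF$-centric vanishes.

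The key step is then to match these extra summands with the composites appearing in the hypothesis. Fix such an $x$. Since $R$ is $\CF$-centric and ${}^{x}R$ is $\CF$-conjugate to $R$, the group ${}^{x}R$ is $\CF$-centric as well, while $Q$ is $\CF$-centric and $Q\cap{}^{x}R$ is not. Applying the hypothesis with the relabelling $P := {}^{x}R$ and $R := Q$ (so that the relevant intersection is ${}^{x}R \cap Q = Q\cap{}^{x}R$, which is not $\CF$-centric) yields that
\[
	M({}^{x}R) \xrightarrow{\res^{{}^{x}R}_{Q\cap{}^{x}R}} M(Q\cap{}^{x}R) \xrightarrow{\tr^{Q}_{Q\cap{}^{x}R}} M(Q)
\]
is the zero map, that is, $\tr^{Q}_{Q\cap{}^{x}R} \circ \res^{{}^{x}R}_{Q\cap{}^{x}R} = 0$. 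Precomposing with the isomorphism $\iso(c_x|_R)$ leaves the whole summand zero. Hence all extra summands vanish, the $\CF^c$-decomposition coincides with the $\CF$-decomposition, and $M|_{\CO(\CF^c)}$ satisfies the Mackey decomposition condition for $\CF^c$, completing the proof.

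I expect the only delicate point to be bookkeeping: confirming that the surviving (centric) summands of the two decompositions really are the same maps, and that the hypothesis is being invoked for a legitimate pair of $\CF$-centric subgroups with non-centric intersection. Once the relabelling $P := {}^{x}R$, $R := Q$ is spotted, the argument is a direct comparison of the two index sets, with the hypothesis exactly killing the terms that distinguish them.
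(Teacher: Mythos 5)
Your proposal is correct and follows essentially the same route as the paper's proof: write out the full Mackey decomposition over $\CF$ for $\CF$-centric subgroups, observe that Mackey-ness of the restriction amounts to the truncated decomposition over $[Q\backslash P/R]_{\CF^c}$, and use the hypothesis (applied to the pair ${}^{x}R$, $Q$, noting that $\CF$-centricity is preserved under $\CF$-conjugation) to kill each summand whose intersection is non-centric. The only difference is that you spell out the relabelling and the centricity of ${}^{x}R$ explicitly, which the paper leaves implicit.
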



\begin{proof}
Since $M$ is a Mackey functor for $\CF$, we have
\[
	\res^{T}_{R} \tr^{T}_{P} = \sum_{x \in [R \backslash T/P]} \tr^{R}_{R\cap{}^{x}P} \res^{{}^{x}P}_{R\cap {}^{x}P} \iso(c_x|_P)
\] 
for all $P, R\leq T\leq S$.  As remarked at the end of Section~\ref{section:Mackey functors.}, the restriction $M|_{\CO(\CF^c)}$ is an $\CF^c$-truncated Mackey functor for $\CF$ if the above equality holds with the sum replaced by the sum over $x \in [P \backslash T/R]_{\CF^c}$ whenever $P$, $R$, $T$ are $\CF$-centric.  In particular, this is the case provided that 
\[
	\tr^{R}_{R\cap{}^{x}P} \res^{{}^{x}P}_{R\cap {}^{x}P} =0
\]
whenever $P\cap {}^xR$ is not $\CF$-centric. 
The proposition follows.
\end{proof}

\begin{exm}
Assume the notations of Proposition \ref{prop:MackeyFvsMackeyFc} with $M = S_{Q,V}$ for some non-$\CF$-centric subgroup $Q$ of $S$ and a simple $k\Out_\CF(Q)$-module $V$. In general $\tr^R_{P\cap R}$ is nonzero: For instance, for $p=3$ and $m\geq 2$, let $s_1$ and $s_2$ be the generators of the abelian $3$-group $\gamma_1=\BZ_{3^m}\times \BZ_{3^m}$ and consider  $G=\gamma_1\rtimes \SL_2(3)$. A Sylow $3$-subgroup of $G$ is $S=\gamma_1\rtimes \langle s \rangle$, where $s$ has order $3$ and acts via the matrix $\bigl(\begin{smallmatrix}
1&-3\\ 1&-2
\end{smallmatrix} \bigr)$. The fusion system $\CF=\CF_S(G)$ corresponds to the fourth line of \cite[Table 6]{DRV2007} and $S$ is the maximal nilpotency class $3$-rank two $3$-group $B(3,2m+1;0,0,0)$. 

Now, the subgroup $Q=\langle s_1^{3^{m-1}},s_2^{3^{m-1}}\rangle$ is isomorphic to $\BZ_3\times \BZ_3$ and is not centric as $C_S(Q)=\gamma_1$. The element $s$ acts on $Q$ via the matrix $\bigl(\begin{smallmatrix}
1&0\\ 1&1
\end{smallmatrix} \bigr)$ and hence the subgroup $P=Q\rtimes \langle s\rangle$ is isomorphic to the extraspecial group of order $27$ and exponent $3$, $3^{1+2}_+$. Moreover, $P$ is $\CF$-centric. Because $\Out_\CF(\gamma_1)=\SL_2(3)$ and this group is $3$-reduced, this outer automorphism group restricts to $\Out_\CF(Q)=\SL_2(3)$.

Let $V$ be the unique $3$-dimensional simple $k\SL_2(3)$-module. Then the action of $\overline s\in \Out_\CF(Q)$ on $V$ can be represented by $T=\Bigl(\begin{smallmatrix}
1&1&1\\ 0&1&-1\\0&0&1
\end{smallmatrix} \Bigr)$. Since $I+T+T^2=\Bigl(\begin{smallmatrix}
0&0&1\\ 0&0&0\\0&0&0
\end{smallmatrix} \Bigr)$, we have $\trm_Q^P(V)=k$ and the transfer map
\[
S_{Q,V}(Q)=V\xrightarrow{\tr^P_{Q}} S_{Q,V}(P)=\trm_Q^P(V)=k
\]
is nonzero (cf. \ref{equ:defSQVtransfer}).


In the proof of Theorem {\hyperlink{theorem:index-p-case}{C}} in the next section, we will nevertheless show that under some assumptions (including the above situation) the composition in Proposition \ref{prop:MackeyFvsMackeyFc} with $M = S_{Q,V}$ and $Q$ a non-$\CF$-centric subgroup is zero.
\end{exm}

\section{Examples.}\label{section:Examples.}

In this section we provide confirmation of the sharpness conjecture in the introduction for the finite group case and for those fusion systems defined on a $p$-group having an abelian subgroup of index $p$. First we deal with the group case. 
For the definition and properties tailored to fusion systems, see \cite[III.\S5.4]{AKO2011}.


\begin{thmB} \label{theorem:group-case}
Let $\CF=\CF_S(G)$ be the fusion system of the finite group $G$ with $S\in \Syl_p(G)$ and let $M=(M^*,M_*)\colon \CO(\CF)\to \fgmod{\BF_p}$ be a Mackey functor over the full orbit category of $\CF$. Then the higher limits of $M^*$ over the centric orbit category vanish: $\varprojlim^i_{\CO(\CF^c)} M^*|_{\CO(\CF^c)}=0$ for $i>0$.
\end{thmB}

\begin{proof}
Consider the following commutative diagram:
\[
\xymatrix{
\CO_p(G)\ar[r]^>>>>>>{\Psi}&\CO_S(G)\ar[r]^>>>>>{\Phi}&\CO(\CF)\ar[r]^{M^*}&\fgmod{\BF_p}\\
\CO^c_p(G)\ar@{^(->}[u]\ar[r]^>>>>>>{\Psi}&\CO^c_S(G)\ar@{^(->}[u]\ar[r]^>>>>>{\Phi}&\CO(\CF^c)\ar@{^(->}[u]\ar[ru]_{M^*|_{\CO(\CF^c)}}\\
}
\]
Here $\CO_S(G)$, $\CO^c_p(G)$ and $\CO^c_S(G)$ are full subcategories of the orbit category $\CO_p(G)$ with objects the $p$-subgroups of $G$ and morphisms given by $\Hom_{\CO_p(G)}(P,Q)=Q \backslash N_G(P,Q)$. The objects of $\CO_S(G)$, $\CO^c_p(G)$ and $\CO^c_S(G)$ are, respectively, the subgroups of $S$, the $p$-centric subgroups of $G$, and the $p$-centric subgroups of $G$ contained in $S$. Recall that a $p$-subgroup $P$ of $G$ is said to be $p$-centric if $Z(P)\in \Syl_p(C_G(P))$, or equivalently, if $C_G(P) = Z(P) \times O_{p'}(C_G(P))$. If $P\leq S$, then $P$ is $p$-centric if and only if $P$ is $\CF$-centric. The functor $\Phi\colon \CO_S(G) \to \CO(\CF)$ is the canonical projection functor. The inclusion $\CO_S(G) \into \CO_p(G)$ is an equivalence of categories and $\Psi\colon \CO_p(G)\to \CO_S(G)$ denotes an inverse equivalence. The functors $\Phi$ and $\Psi$ restrict, respectively, to the canonical projection and an equivalence of categories between the full subcategories of $p$-centric subgroups.

Note that $\CO_S(G)$ satisfies all conditions in Proposition \ref{T:JM-acyclicity}, because $\CO_p(G)$ does and $\Psi$ is an equivalence of categories. The composite $M\Phi = (M^*\Phi, M_*\Phi) \colon \CO_S(G) \to \fgmod{\BF_p}$ is a proto-Mackey functor for the category $\CO_S(G)$. By Proposition \ref{T:JM-acyclicity}, we conclude that the contravariant part $M^*\Phi$ is acyclic. Since an equivalence of categories preserves higher limits (see for example \cite[III.5.6]{AKO2011}), it follows that $M^*\Phi\Psi$ is acyclic.

Now, by \cite[Theorem 1.2]{G2002}, we can discard the non $p$-centric subgroups when evaluating higher limits, i.e., 
\[
	{\varprojlim}^*_{\CO_p(G)} M^*\Phi\Psi\cong {\varprojlim}^*_{\CO^c_p(G)} M^*\Phi\Psi.
\]
Finally, since $\Phi\Psi|_{\CO^c_p(G)}$ is the quotient by $p'$-group (see \cite[Lemma
1.3]{{BLO2003equiv}}), we have
\[
{\varprojlim}^*_{\CO^c_p(G)} M^*\Phi\Psi \cong {\varprojlim}^*_{\CO(\CF^c)} M^*,
\]
which completes the proof.
\end{proof}

Next we shall apply the method described in Proposition \ref{prop:strategy} together with Theorem \hyperref[T:Mackey-acyclic-over-OFc]{A} and Proposition \ref{prop:MackeyFvsMackeyFc} to confirm the sharpness conjecture for a special class of fusion systems, including some exotic fusion systems.

\hypertarget{theorem:index-p-case}{\begin{thmC}} 
Let $S$ be a nonabelian finite $p$-group with an abelian subgroup of index $p$ and let $\CF$ be a saturated fusion system $\CF$ on $S$. For any Mackey functor $M=(M^*,M_*)\colon \CO(\CF)\rightarrow \fgmod{\BF_p}$, we have 
\[
{{\varprojlim}^i_{\CO(\CF^c)}} M^*|_{\CO(\CF^c)}= 0\qquad\forall i>0.
\]
\end{thmC}


The structure of the group $S$ and all possible $\CF$-essential subgroups of $S$ are analyzed in \cite{Oindp1}, which we summarize in the following lemmas.  First we distinguish two cases.

\begin{lem}[{\cite[Lemma 1.9]{Oindp1}}] \label{T:index-p-S}
Let $S$ be a nonabelian finite $p$-group with an abelian subgroup $A$ of index $p$. Then either
\begin{enumerate}
\item $|S/Z(S)| \geq p^3$ and $A$ is the unique abelian subgroup of $S$ of index $p$, or
\item $S/Z(S) \cong C^2_p$ and $S$ has exactly $p+1$ abelian subgroups of index $p$,
\end{enumerate}
\end{lem}

In the second case, the $p+1$ abelian subgroups of $S$ of index $p$ are precisely the proper centric subgroups of $S$ (i.e., subgroups $P<S$ such that $C_S(P)\leq P$), and $\CF$-essential subgroups occur only among those. The first case is dealt with in the following lemma.

\begin{lem}[{\cite[Lemma 2.3]{Oindp1}}] \label{T:index-p-essential}
Let $S$ be a nonabelian finite $p$-group with a unique abelian subgroup $A$ of index $p$.  Set $Z=Z(S)$, $Z_2 = Z_2(S)$.  Let $\CF$ be a saturated fusion system on $S$.  
\begin{enumerate}
\item Suppose $P< S$ is an abelian $\CF$-essential subgroup such that $P\neq A$. Then $P\cap A = Z$, $N_A(P) = Z_2$ and $|N_S(P)/P| = |Z_2/Z|= p$.
\item Suppose $P< S$ is a nonabelian $\CF$-essential subgroup.  Then $Z(P) = Z$, $P\cap A = Z_2$, $|N_S(P)/P| = |Z_2/Z|= p$.
\end{enumerate}
\end{lem}

To apply our method, we also need to know the centric subgroups of $S$.

\begin{lem} \label{T:index-p-centric} 
Let $S$ be a nonabelian finite $p$-group with an abelian subgroup $A$ of index $p$.  Set $Z=Z(S)$ and let $P< S$.
\begin{enumerate}
\item If $P\leq A$, then $P$ is centric in $S$ if and only if $P=A$.
\item If $P\not\leq A$, then $P$ is centric in $S$ if and only if $Z\leq P$.  More precisely, if $P\not\leq A$ is abelian centric, then $P\cap A = Z$; if $P\not\leq A$ is nonabelian centric, then $Z(P) = Z$.
\end{enumerate}
\end{lem}

\begin{proof}
The first part is obvious.  So assume that $P\not\leq A$. If $P$ is centric, then clearly $Z\leq P$.  Conversely, suppose that $Z\leq P$.  Take $x\in P - A$.  Then $P = P_0\la x \ra$ where $P_0 = P\cap A$ and $S = A\la x \ra$.  Now $C_S(P) \leq C_S(x)$ and $C_S(x)$ has $C_A(x) = Z$ as an index $p$ subgroup with $x \in C_S(x) - Z$.  Thus $C_S(P) \leq C_S(x) = Z\la x\ra \leq P$ and so $P$ is centric. If $P$ is abelian, then $S = AP$ and both $A$ and $P$ are abelian, so $P_0\leq Z\leq P$.  Since $|P:P_0|=p$ and $P\not\leq A$, it follows that $P_0=Z$.  If $P$ is nonabelian, then $Z \leq Z(P) = C_S(P) \leq Z\la x \ra$, so either $Z(P) = Z$ or $Z(P) = Z\la x\ra$.  But if $Z(P) = Z\la x\ra$, then $P = P_0Z(P)$ and so $P$ is abelian, a contradiction.  Thus $Z(P) = Z$.
\end{proof} 

The above information will reduce the configurations we need to consider to simple situations which can be handled easily.

\begin{proof}[Proof of Theorem {\hyperlink{theorem:index-p-case}{C}}]
Consider the simple Mackey functor $S_{Q,V}$ with non-$\CF$-centric $Q\leq S$. We will show that $S_{Q,V}|_{\CO(\CF^c)}$ is an $\CF^c$-restricted Mackey functor for $\CF$, and hence that it is  acyclic by Theorem \hyperref[T:Mackey-acyclic-over-OFc]{A}. This will prove the theorem by Proposition \ref{prop:strategy}. In fact, by Proposition \ref{prop:MackeyFvsMackeyFc}, it suffices to show that the composite
\[
	S_{Q,V}(P) \xrightarrow{\res^P_{P\cap R}} S_{Q,V}(P\cap R) \xrightarrow{\tr^R_{P\cap R}} S_{Q,V}(R)
\]
is zero for any two $\CF$-centric subgroups $P, R\leq S$ such that $T:=P\cap R$ is not $\CF$-centric. So let $P$ and $R$ be such subgroups and note then that $Z:=Z(S) \leq C_S(P)\cap C_S(R) \leq P\cap R$.  Also we may assume that 
\begin{enumerate}
\item[(a)] $P$, $R$ are nonabelian proper subgroups of $S$, and that
\item[(b)] $Q$ is $\CF$-conjugate to $T$ if $T$ is abelian.
\end{enumerate}
Indeed, both $P$ and $R$ are proper subgroups of $S$ because otherwise $P\cap R$ is $\CF$-centric.  Since $P$, $R$ are $\CF$-centric while $T=P\cap R$ is not, we see that $T$ is properly contained in both $P$ and $R$.  On the other hand, if $U\leq S$ and $S_{Q,V}(U)\neq 0$, then there is $L\leq U$ which is $\CF$-conjugate to $Q$ and such that $C_U(L)\leq L$ by Corollary \ref{T:non-vanishing}. Thus if, moreover, $U$ is abelian, then $U = L$ and so $U$ is $\CF$-conjugate to $Q$. This proves both (a) and (b).

If $S$ has more than one abelian subgroup of index $p$, then the only nonabelian centric subgroup of $S$ is $S$ itself by the comments after Lemma \ref{T:index-p-S}. So we may assume that $A$ is the unique abelian subgroup of $S$ of index $p$.

Now first suppose that $T\leq A$.  Then $T$ is abelian and so we may assume that there is an $\CF$-isomorphism $\alpha\colon Q\to T$ by the above observation (b).  Also we may assume that $P\cap A = T = R\cap A$. Indeed, if $P\cap A > T$, then since $P\cap A$ is abelian and $|P\cap A|>|Q|$, we have $S_{Q,V}(P\cap A)=0$ by Corollary \ref{T:non-vanishing}, and so the composite is zero (and similarly for $R\cap A>T$). Now $T$ is normal in both $P$ and $R$ as $A$ is normal in $S$, so the composite reduces to
\[
	\trm^P_T({}^{\alpha}V) \xrightarrow{\res^P_T} {}^{\alpha}V \xrightarrow{\tr^R_T} \trm^R_T({}^{\alpha}V).
\]
The homomorphisms from $P/T$ and $R/T$ to $S/A\cong C_p$ (induced by the inclusions $P \hookrightarrow S$ and $R \hookrightarrow S$) are both isomorphisms. If $C_P(T)=P$ the transfer $\trm^P_T$ and the composite above are both zero (and similarly for $R$). So we may assume that $C_P(T)=C_R(T)=T$ and hence that $C_S(T)=A$ too. Thus, both $\Out_P(T)\cong P/T$ and $\Out_R(T)\cong R/T$ are equal to $\Out_S(T)\cong S/A$. Therefore we have $\trm^R_T({}^{\alpha}V) = \trm^P_T({}^{\alpha}V)$ and the composite in the diagram equals the composite $\tr^P_T\res^P_T$, and this is zero by Proposition \ref{T:non-vanishing:cohomological}.

Now suppose $T\not\leq A$. By an observation at the beginning of the proof we have $Z\leq T$ and hence,  by Lemma \ref{T:index-p-centric},  $T$ is centric in $S$. Note that, in fact, as $Z\leq A$, we must  have $Z<T$. If there is no $\CF$-essential subgroups of $S$ properly containing $T$, then the $\CF$-conjugates of $T$ (other than $T$ itself) are given by the $\CF$-automorphisms of $S$ by Alperin's fusion theorem, and so they also contain $Z$ and are not contained in $A$.  So all $\CF$-conjugates of $T$ are centric, whence $T$ is $\CF$-centric, contradicting the assumption.  Thus there is at least one $\CF$-essential subgroup $U$ of $S$ properly containing $T$. By Lemma~\ref{T:index-p-essential}, either $U$ is abelian and $|U/Z|=p$ or $U$ is nonabelian and $|U/Z|=p^2$.  Since $Z<T<U$, it follows that $U$ is nonabelian, whence $U=Z_2T$ and $|Z_2/Z|=p$ again by Lemma~\ref{T:index-p-essential}.

We show that both $P$ and $R$ contain $Z_2$.  Then $P$ and $R$ contain $U$ as they already contain $T$. Then the theorem follows because the composite in question factors through $\tr^U_T \res^U_T$, which is zero by Proposition \ref{T:non-vanishing:cohomological}. Since $Z(S/Z)=Z_2/Z$ has order $p$, it is contained in every nontrivial normal subgroup of $S/Z$.  Since $P\not\leq A$, we have $S=AP$ and so $P_0=P\cap A$ is a normal subgroup of $S$. But $P_0$ properly contains $Z$ since otherwise $P$ is abelian.  Thus $Z_2 \leq P$ and similarly $Z_2\leq R$, as desired. 
\end{proof}

The following example shows that the restriction to $\CO(\CF^c)$ of a Mackey functor
for $\CF$ may not be an $\CF^c$-restricted Mackey functor for $\CF$. So to prove Conjecture in the introduction it is not enough to employ Theorem \hyperref[T:Mackey-acyclic-over-OFc]{A} and certain strategy, like that in the proof of Theorem {\hyperlink{theorem:index-p-case}{C}}
, must be used.
\begin{exm}[D\'iaz--Libman]\label{example:nonmackey}
We will exhibit a finite $p$-group $S$ such that the degree $1$ cohomology functor $H^1(-;\BF_p)\colon \CO(\CF^c)\rightarrow \fgmod{\BF_p}$ is not an $\CF^c$-restricted Mackey functor for the nilpotent fusion system $\CF=\CF_S(S)$. By the sharpness result Theorem B we know that ${\varprojlim}^i_{\CO(\CF^c)} H^1(-;\BF_p)=0$ for $i\geq 1$ though. 

The $p$-group $S$ will have order $p^{p+3}$ ($p$ any prime) and two centric normal subgroups $P$, $Q\unlhd S$ such that:
\begin{enumerate}[\rm (a)]
\item $PQ=S$, $P\cap Q$ is not centric.
\item $H^1(Q;\BF_p)\stackrel{\tr}\rightarrow H^1(S;\BF_p) \stackrel{\res}\rightarrow H^1(P;\BF_p)$ is nonzero.
\end{enumerate}
This proves that $H^1(-;\BF_p)\colon \CO(\CF_S(S)^c)\rightarrow \fgmod{\BF_p}$ is not a Mackey functor.

Consider first the Jordan block matrix $A_n$ of size $n$ and eigenvalue $1$. It is easy to check that, for a prime $p$, $A_p$ has order $p$ working in $\BF_p$. Now define the $(p+2)\times(p+2)$ matrix with two Jordan blocks, one of size $p$ and one of size $2$:
\[
 B_p = 
 \begin{pmatrix}
  A_p & 0 & 0\\
  0 & 1 & 1 \\
  0 & 0 & 1 \\
   \end{pmatrix}.
\]
Then $B_p$ also has order $p$ working in $\BF_p$, and the eigenspace associated to eigenvalue $1$ consists of all vectors $ \begin{pmatrix} *&  0&  \hdots&  0&  *&  0\end{pmatrix}^t$, where ${}^t$ stands for transposition.

Define $Q=\BZ_p\times \BZ_p \times \cdots \times \BZ_p$ ($(p+2)$ copies) and $S=Q\rtimes \langle B_p \rangle$. Notice that the subspace $U$ of $Q$ with last coordinate equal to zero is invariant under $B_p$. Define $P=U \rtimes \langle B_p \rangle$. It is clear than $PQ=S$ and that $P\cap Q=U$ is not centric as $Q\leq C_S(U)$. The subgroup $Q$ is centric, and the subgroup $P$ is centric because $C_S(P)\leq C_S(\langle B_p \rangle)\leq P$. So we are left to prove that transfer from $Q$ followed by restriction to $P$ is nonzero. 

Recall that the transfer $\tr:H^1(H;\BF_p)\rightarrow H^1(G;\BF_p)$, where $H\leq G$, is given by
\[
\tr(f)(x)=\sum_i f(t^{-1}_{\sigma_i} x t_i),
\]
where $f\colon H\to \BF_p$ is any homomorphism, $\{t_i\}$ is a set of representatives for the cosets $G/H$ and $\sigma$ is the permutation of the representatives induced by multiplication by $x$ on the cosets $G/H$. 

Our situation is $H^1(Q;\BF_p)\stackrel{\tr}\rightarrow H^1(Q\rtimes \langle B_p\rangle;\BF_p) \stackrel{\res}\rightarrow H^1(U\rtimes \langle B_p\rangle;\BF_p)$. Consider an arbitrary homomorphism $f:Q=\BZ_p\times \cdots\times\BZ_p\rightarrow \BF_p$ and the element $u\in U\leq U\rtimes \langle B_p \rangle$ given by $u= \begin{pmatrix}0&0&\hdots&0&1&0&0\end{pmatrix}^t$. Then $\res(\tr(f))(u)=\sum^{p-1}_{i=0} f(t^{-1}_i u t_i)$ as the permutation induced by $u$ on the cosets is the identity (as $Q$ is normal in $S$ and $u\in Q$). Taking $I,B_p,\cdots,B^{p-1}_p$ as representatives we obtain 
\[
	\res(\tr(f))(u)=\sum^{p-1}_{i=0} f((B_p)^i(u))=\sum^{p-1}_{i=0} f((A_p)^i(v))=f((I+A_p+\cdots +A^{p-1}_p)v),
\]
where $v={\begin{pmatrix}0&0&\hdots&0&1\end{pmatrix}}^t$. Hence we have to consider the sum $C=I+A_p+\cdots +A^{p-1}_p$. Write $A_p=I+N$, where $I$ is the identity matrix and $N$ is a nilpotent matrix. Then $C=\sum_{i=0}^{p-1} (I+N)^i=\sum_{i=0}^{p-1} \sum_{j=0}^i {i \choose j} N^j=\sum_{j=0}^{p-1}(\sum_{i=j}^{p-1}{i\choose j}) N^j$.
The identity $\sum_{i=j}^{p-1}{i\choose j}={p\choose j+1}$ implies that over $\BZ_p$ the matrix $C$ has a unique non-zero entry equal to $1$  in the upper right corner. Hence we deduce that
\[
	\res(\tr(f))(u)=f(Cv)=f(C\begin{pmatrix}0&0&\hdots&0&1 \end{pmatrix}^t)=f(\begin{pmatrix}1&0&\hdots&0&0 \end{pmatrix}^t).
\]
Choosing $f:\BZ_p\times\hdots\times \BZ_p\rightarrow \BF_p$ which does not vanish on the first coordinate we are done.
\end{exm}

\end{document}